\theoremstyle{theorem}
\newtheorem{theorem}{Theorem}
\newtheorem{corollary}[theorem]{Corollary}
\newtheorem{proposition}[theorem]{Proposition}
\newtheorem{problem}[theorem]{Problem}
\theoremstyle{definition}
\newtheorem{remark}[theorem]{Remark}
\newtheorem{definition}[theorem]{Definition}
\newtheorem{example}[theorem]{Example}
\let\Bbb=\mathbb
\let\phi=\varphi
\def\NZQ{\Bbb}
\def\NN{{\NZQ N}}
\def\ZZ{{\NZQ Z}}
\def\Mon{\operatorname{Mon}}
\def\Ann{\operatorname{Ann}}
\def\hdepth{\operatorname{hdepth}}
\def\sdepth{\operatorname{sdepth}}
\def\Min{\operatorname{Min}}
\let\oldbigwedge\bigwedge
\def\BIGwedge{{\textstyle\oldbigwedge}}
\def\medwedge{{\scriptstyle\oldbigwedge}}
\def\bigwedge{\mathchoice{\BIGwedge}{\BIGwedge}{\medwedge}{}}
\let\iso=\cong
\let\epsilon=\varepsilon
\let\tilde=\widetilde
\begin{document}


\title{An algorithm for computing the multigraded Hilbert depth of a module}

\author{Bogdan Ichim and Andrei Zarojanu}

\address{Simion Stoilow Institute of Mathematics of the Romanian Academy, Research Unit 5, P.O. Box 1-764, 014700 Bucharest, Romania}
\email{bogdan.ichim@imar.ro}

\address{Simion Stoilow Institute of Mathematics of the Romanian Academy, Research Unit 5, P.O. Box 1-764, 014700 Bucharest, Romania
and Faculty of Mathematics and Computer Sciences, University of Bucharest, Bucharest, Romania}
\email{andrei\_zarojanu@yahoo.com}

\keywords{Hilbert depth; Stanley depth; Computational experiments.}
\subjclass[2010]{Primary: 05E40; Secondary: 68R05.}

\begin{abstract}
A method for computing the multigraded Hilbert depth of a module was presented in \cite{IJ}.
In this paper we improve the method and
we introduce an effective algorithm for performing the computations. In a particular case, the algorithm may also be easily adapted for computing  the Stanley depth of the module. We further present interesting examples which were
found with the help of an experimental implementation of the algorithm \cite{IZ}. Thus, we completely solve several open problems proposed by Herzog in \cite{H}.
\end{abstract}

\maketitle

\section{Introduction}

In this paper we introduce an algorithm for computing the Hilbert depth of a finitely generated multigraded module $M$ over the standard multigraded polynomial ring $R=K[X_1,\dots,X_n]$. The algorithm is based on the method
presented in \cite{IJ} and some extra improvements. It may also be adapted for computing the Stanley depth of $M$ if $\dim_K M_a \le 1$ for all $a\in \ZZ^n$. Further, we provide an experimental implementation of the algorithm
\cite{IZ} in CoCoA \cite{C} and we use it to find interesting examples. As a consequence, we give complete answers to the following open problems proposed by Herzog in \cite{H}:

\begin{problem}\label{P1:Herzog}\cite[Problem 1.66]{H} Find an algorithm to compute the Stanley depth for finitely
generated multigraded $R$-modules $M$ with $\dim_K M_a \le 1$ for all $a\in \ZZ^n$.
\end{problem}

\begin{problem}\label{P2:Herzog}\cite[Problem 1.67]{H} Let $M$ and $N$ be finitely generated multigraded $R$-modules. Then
$$ \sdepth(M \oplus N) \geq \Min \{ \sdepth(M),\sdepth(N)\}.$$
Do we have equality?
\end{problem}

\begin{problem}\label{P3:Herzog}\cite[Text following Problem 1.67]{H} In the particular case that $I\subset R$ is a monomial ideal, does
$\sdepth(R \oplus I)=\sdepth I$ hold?
\end{problem}

The examples are contained in Section \ref{Experiments}. One may read and check them directly (it is enough to see that each square--free monomial of the given modules is present
one and only one time in the given decomposition). The reader interested only in the answers to Problems \ref{P2:Herzog} and \ref{P3:Herzog} may  skip the rest of the paper and jump directly to Section \ref{Experiments}.

In recent years, \emph{Stanley decompositions} of multigraded modules
over $R$ have been discussed
intensively. These decompositions, introduced by Stanley in \cite{S},
break the module $M$ into a direct sum of \emph{Stanley spaces},
each being of type $mS$ where $m$ is a homogeneous element of $M$,
$S=K[X_{i_1},\dots,X_{i_d}]$ is a polynomial subalgebra of $R$ and $S\bigcap \Ann m=0$. One says that $M$ has \emph{Stanley depth} $s$, $\sdepth M=s$,  if
one can find a Stanley decomposition in which $d\ge s$ for each
polynomial subalgebra involved, but none with $s$ replaced by $s+1$.
\medskip

The computation of the Stanley depth is not an easy task, due mainly to its combinatorial  nature. A first step was done by Herzog, Vladoiu and Zheng in \cite{HVZ}, where they introduced a method for computing the Stanley depth of
a factor of a monomial ideal which was later developed into an effective algorithm by Rinaldo in \cite{Ri}. Some remarkable results in the study of the Stanley depth in the multigraded case were also presented by Apel (see
\cite{A1}, \cite{A2}), Herzog et al.\ (see \cite{HP}, \cite{HSY}) and Popescu et al.\ (see \cite{AP}, \cite{P}).
\medskip

Hilbert series are the most important numerical invariants of finitely generated graded and multigraded modules over $R$ and they form the bridge from commutative algebra to its combinatorial applications (we refer here to
classical results of Hilbert, Serre, Ehrhart and Stanley, see \cite{BG}). A new type of decompositions for multigraded modules $M$ depending only on the Hilbert series of $M$ was introduced by Bruns, Uliczka and Krattenthaler in
\cite{BKU} and called \emph{Hilbert decompositions}. They are a weaker type of decompositions not requiring
the summands to be submodules of $M$, but only vector subspaces isomorphic to polynomial subrings. The notion of \emph{Hilbert depth} $\hdepth M$ is defined accordingly. Several results concerning both the graded and multigraded
cases were presented in \cite{BKU2}, \cite{JU} and \cite{U}. All of them are based on both combinatorial and algebraic  techniques. Algorithms for computing the graded Hilbert depth of a module were introduced first in \cite{p},
then in a more complex setup in \cite{BMU}, while a method for computing the multigraded Hilbert depth of a module was presented in \cite{IJ}.
\medskip

The paper is organized as follows. In Section \ref{Pre} we recall some results concerning Hilbert depth that will be used in this paper.

Section \ref{Improve} is devoted to improve the method presented in \cite{IJ} by restricting as much as possible the search for a suited Hilbert decomposition.
Theorem \ref{theo:new} shows the existence of upper--discrete Hilbert partitions of degree $s$ for $\hdepth M \geq s$. We conclude that for the effective computation of the Hilbert depth it is better to consider only this kind of
partitions. The result generalize both \cite[Lemma 3.4]{Ri} and \cite[Lemma 3.3]{Sh} (notice that, in the particular case of a factor of a monomial ideal, the Hilbert partitions coincide with the poset partitions considered by
Rinaldo and Shen).

In Section \ref{Algo} we introduce a \emph{recursive} algorithm for computing the multigraded Hilbert depth of a module (see Algorithm \ref{algo:hdepth}). The algorithm is relative easy to implement because of its recursive form
and may also be used directly for computing the Stanley depth in the case of a factor of a monomial ideal. A \emph{non-recursive} algorithm for computing Stanley depth in the case of a factor of a monomial ideal  was introduced in
\cite[Algorithm 1]{Ri}. For computing the Stanley depth in the case of a factor of a monomial ideal the computation times of the two algorithms are similar (comparing our implementation with the original implementation of \cite{Ri}, see Section \ref{Experiments}).

Hilbert decompositions are intimately related to Stanley decompositions: All Stanley decompositions are Hilbert decompositions; moreover, the latter are prerequisites to the existence of Stanley decompositions. In Section
\ref{AlgoS} we assume that $\dim_K M_a \le 1$ for all $a\in \ZZ^n$ and we show that Algorithm \ref{algo:hdepth} may be easily modified for computing Stanley depth in this case (see Algorithm \ref{algo:sdepth}). This solves
completely Problem \ref{P1:Herzog}.

In Section \ref{Experiments} we present the result of several computations done with the algorithm introduced in Section \ref{Algo}. We have experimented with an implementation of the algorithm in CoCoA and we have found an example
in dimension 4 which shows that the answer to Problem \ref{P2:Herzog} is \emph{No}, then an example in dimension 6 which shows that even the answer to the more particular case considered in Problem \ref{P3:Herzog} is \emph{No}.
A nice theorem of J. Uliczka \cite{U} arranged in a quick algorithm by A. Popescu in \cite{p} for computing the graded Hilbert depth and
several computations from \cite{p} has been the basis of the
search for these examples (remark that, in general, the graded Hilbert depth is bigger than the multigraded Hilbert depth, so we were
lucky to find these examples in relatively low dimension).
\medskip

We end this section with a vague remark. In the particular case of a normal affine monoid, suited Hilbert decompositions have already been used with success in order to design arguable the fastest available algorithms for computing
Hilbert series (see \cite{BI}, \cite{BIS} and \cite{BK}). It is an interesting open problem if it is possible to use suited Hilbert decompositions in order to design efficient algorithms for computing Hilbert series in other
cases.

\section{Prerequisites}\label{Pre}

Let $R=K[X_1,...,X_n],$ with $K$ a field, and let $M$ be a finitely generated $\ZZ^n$-graded $R$-module. In \cite{IJ} the authors presented a method for computing the multigraded Hilbert depth of $M$ by considering Hilbert
partitions of its Hilbert series. We refer the reader also to \cite{BKU} and \cite{HVZ} on which \cite{IJ} is based. In this section we recall the method of \cite{IJ}.

A natural partial order on $\ZZ^n$ is defined as follows: Given $a,b\in \ZZ^n$, we say that $a \preceq b$ if and only if $a_i\le b_i$ for $i=1,\ldots,n$. Note that $\ZZ^n$ with this partial order is a distributive lattice with meet
$a\wedge b$ and join $a\vee b$ being the componentwise minimum and maximum, respectively.
We set the interval between $a$ and $b$ to be
\[
[a,b]=\{c \in \ZZ^n \ |\  a \preceq c \preceq b \}.
\]

We first recall a definition and a result of Ezra Miller (see \cite{M}) that will be useful in the sequel.
Let $g\in \NN^n$. The module $M$ is said to be $\NN^n$-graded if $M_a=0$ for $a \notin \NN ^n$ and $M$ is said to be {\it positively $g$-determined} if it is $\NN ^n$-graded and the multiplication map $\cdot X_i : M_{a}
\longrightarrow M_{a+e_i}$ is an isomorphism whenever $a_i \ge g_i$.
A characterization of positively $g$-determined modules is given by the following.

\begin{proposition}\label{prop:ezra}\cite[Proposition 2.5]{M}
The module $M$ is positively $g$-determined if and only if the multigraded Betti numbers of $M$ satisfy $\beta_{0,a}=\beta_{1,a}=0$ unless $0 \preceq a \preceq g$.
\end{proposition}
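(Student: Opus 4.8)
The plan is to deduce both implications from a single elementary observation about twisted free modules together with the fact that, in each fixed multidegree, the cokernel and kernel functors on vector spaces are exact. The key auxiliary fact is: if $0 \preceq b \preceq g$, then $R(-b)$ is positively $g$-determined. Indeed $R(-b)_a = K$ for $a \succeq b$ and $R(-b)_a = 0$ otherwise, so $R(-b)$ is $\NN^n$-graded; and if $a_i \ge g_i$, then $\cdot X_i \colon R(-b)_a \to R(-b)_{a+e_i}$ is an isomorphism: when $a \succeq b$ it is the bijection $X^{a-b} \mapsto X^{a-b+e_i}$, and when $a \not\succeq b$ the inequality must fail in some coordinate $j$, which cannot be $i$ since $a_i \ge g_i \ge b_i$, so $a+e_i \not\succeq b$ as well and the map is $0 \to 0$. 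Hence any finite direct sum of such twists is $\NN^n$-graded and has $\cdot X_i$ an isomorphism in multidegree $a$ whenever $a_i \ge g_i$.

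For the implication ``$\beta_{0,a} = \beta_{1,a} = 0$ unless $0 \preceq a \preceq g$'' $\Rightarrow$ ``$M$ is positively $g$-determined'', I would pick a minimal multigraded free presentation $F_1 \xrightarrow{\phi} F_0 \to M \to 0$. By hypothesis $F_0$ and $F_1$ are finite direct sums of twists $R(-b)$ with $0 \preceq b \preceq g$; in particular they are $\NN^n$-graded, so $M$ is $\NN^n$-graded, and by the auxiliary fact $\cdot X_i$ acts as an isomorphism on $(F_0)_a$ and on $(F_1)_a$ for every $a$ with $a_i \ge g_i$. Fixing such an $a$, the commutative square formed by $\phi$ in multidegrees $a$ and $a+e_i$ together with the two vertical isomorphisms $\cdot X_i$ induces an isomorphism on cokernels; since $M_c = \Coker\bigl((F_1)_c \to (F_0)_c\bigr)$ for every $c$, this induced map is exactly $\cdot X_i \colon M_a \to M_{a+e_i}$, which is therefore an isomorphism.

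For the converse, assume $M$ is positively $g$-determined. Since $M$ is $\NN^n$-graded, $\beta_{0,a} = \dim_K (M/\mathfrak m M)_a = 0$ for $a \notin \NN^n$, where $\mathfrak m = (X_1,\dots,X_n)$; and if $a \in \NN^n$ has $a_i > g_i$, then $a - e_i \in \NN^n$ with $(a-e_i)_i \ge g_i$, so surjectivity of $\cdot X_i \colon M_{a-e_i} \to M_a$ gives $M_a = X_i M_{a-e_i} \subseteq (\mathfrak m M)_a$ and hence $\beta_{0,a} = 0$. Thus in a minimal free cover $\pi \colon F_0 \to M$ the module $F_0$ is a sum of twists $R(-b)$ with $0 \preceq b \preceq g$, so it is positively $g$-determined. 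Put $N = \Ker \pi$, so that $\beta_{1,a}(M) = \beta_{0,a}(N)$. For $a_i \ge g_i$ the vertical maps $\cdot X_i$ in the commutative square relating $(F_0)_a \to M_a$ and $(F_0)_{a+e_i} \to M_{a+e_i}$ are both isomorphisms, hence so is the induced map on kernels $\cdot X_i \colon N_a \to N_{a+e_i}$; since $N \subseteq F_0$ is $\NN^n$-graded, $N$ is positively $g$-determined, and applying the $\beta_0$ statement already proved (now to $N$) yields $\beta_{1,a}(M) = \beta_{0,a}(N) = 0$ unless $0 \preceq a \preceq g$.

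The diagram chases are routine; the points requiring attention are the verification of the isomorphism condition for $R(-b)$ in the ``off-support'' multidegrees (where one uses $a_i \ge g_i \ge b_i$ to locate the failing coordinate away from $i$) and the fact that the class of positively $g$-determined modules is stable under passing to first syzygies --- it is exactly this stability that lets the bound on $\beta_1$ follow formally from the bound on $\beta_0$.
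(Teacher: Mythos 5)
Your proof is correct. Note that the paper itself offers no proof of this statement: it is quoted verbatim from Miller's paper (reference [M], Proposition 2.5), so there is nothing in the source to compare against line by line. Your argument is a clean, self-contained reconstruction along the standard lines: you verify the claim for a single twist $R(-b)$ with $0 \preceq b \preceq g$ (the ``off-support'' case, where $a_i \ge g_i \ge b_i$ forces the failing coordinate away from $i$, is exactly the point that needs care, and you handle it correctly), transfer it to $M$ via exactness of cokernels in each multidegree, and for the converse combine the computation $\beta_{0,a} = \dim_K(M/\mathfrak m M)_a$ with the observation that the first syzygy of a positively $g$-determined module is again positively $g$-determined, so that the bound on $\beta_1$ follows formally from the bound on $\beta_0$. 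This is essentially Miller's own strategy, and all the steps (the five-lemma/snake-lemma diagram chases, the identity $\beta_{1,a}(M) = \beta_{0,a}(N)$ for the minimal cover) are standard and correctly invoked. No gaps.
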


Let
\[
\bigoplus_{a\in \ZZ^n} R(-a)^{\beta_{1,a}} \longrightarrow \bigoplus_{a\in \ZZ^n} R(-a)^{\beta_{0,a}} \longrightarrow M \longrightarrow 0,
\]
be a minimal multigraded free presentation of $M$ and assume for simplicity, and without loss of generality, that all $\beta_{0,a}=0$  (and \emph{a fortiori}
all $\beta_{1,a}=0$) if $a \notin \NN^n$.
\medskip

Let $g\in \NN^n$ be such that the multigraded Betti numbers of $M$ satisfy the equalities $\beta_{0,a}=\beta_{1,a}=0$ unless $0\preceq a \preceq g$. Then, according to Proposition \ref{prop:ezra}, the module $M$ is positively
$g$-determined. Let
\[
H_M(X)=\sum_{a \in \NN^n} H(M,a)X^a
\]
be the \emph{Hilbert series} of $M$ and consider the polynomial
\[
H_M(X)_{\preceq g}:=\sum_{\substack{0\preceq  a \preceq g}} H(M,a)X^a.
\]

For $a,b \in \ZZ^n$ such that $a\preceq b$, we set
\[
Q[a,b](X):=\sum_{a \preceq c \preceq b } X^{c}
\]
and call it the \emph{polynomial induced by the interval} $[a,b]$.

\begin{definition}\label{defi:Hpartition}
We define a \emph{Hilbert partition}  of the polynomial $H_M(X)_{\preceq g}$ to be an expression
\[
\mathfrak{P}: H_M(X)_{\preceq g}=\sum_{i \in I} Q[a^i,b^i](X)
\]
as a finite sum of polynomials induced by the intervals $[a^i,b^i]$.
\end{definition}

Further, we need the following notations. For  $a\preceq g$ we set $Z_{a}=\{X_j \ |\  a_j=g_j\}$. Moreover, we denote by $K[Z_{a}]$ the subalgebra generated by the subset of the indeterminates $Z_{a}$. We also define the map
\[
\rho:\{0\preceq a\preceq g\} \longrightarrow \NN, \quad \rho (a):=|Z_a|,
\]
and for $0 \preceq a \preceq b\preceq g$ we set
\[
\mathcal{G}[a,b]=\{c\in [a,b] \mid c_j=a_j \text{ for all } j \in \NN \text{ with } X_j \in Z_{b}\}.
\]
The main result of \cite{IJ} (which generalizes the main result of \cite{HVZ}) is:

\begin{theorem}\cite[Theorem 3.3]{IJ} \label{theo:main} The following statements hold:
\begin{enumerate}
\item Let $\mathfrak{P}:H_M(X)_{\preceq g}=\sum_{i=1}^r Q[a^i, b^i](X)$ be a Hilbert partition of $H_M(X)_{\preceq g}$. Then
\[
\mathfrak{D}(\mathfrak{P}): M\iso \bigoplus_{i=1}^r\Big(\bigoplus_{c\in \mathcal{G}[a^i,b^i]} K[Z_{b^i}](-c)\Big)  \eqno[\star]
\]
is a Hilbert decomposition of $M$. Moreover,
\[
\hdepth \mathfrak{D}(\mathfrak{P})=\min\{\rho(b^i):\  i=1,\ldots,r\}.
\]
\item Let $\mathfrak{D}$ be a Hilbert decomposition of $M$. Then there exists a Hilbert partition $\mathfrak{P}$ of $H_M(X)_{\preceq g}$ such that
\[
\hdepth \mathfrak{D}(\mathfrak{P})\ge \hdepth \mathfrak{D}.
\]
In particular, $\hdepth M$ can be computed as the maximum of the numbers $\hdepth \mathfrak{D}(\mathfrak{P})$, where $\mathfrak{P}$ runs over the finitely many Hilbert partitions of $H_M(X)_{\preceq g}$.
\end{enumerate}
\end{theorem}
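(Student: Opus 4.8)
I would prove the two assertions separately: part~(1) is a direct multigraded Hilbert series computation, while part~(2) converts an arbitrary Hilbert decomposition into one of the shape $[\star]$ without decreasing the invariant. Two remarks are worth recording at the outset. A Hilbert decomposition pins down $M$ only up to isomorphism of $\ZZ^n$-graded $K$-vector spaces, so ``being a Hilbert decomposition of $M$'' is a statement about Hilbert functions only; and in any Hilbert partition $\mathfrak{P}$ every interval $[a^i,b^i]$ is automatically contained in $[0,g]$, because $Q[a^i,b^i](X)$ has nonnegative coefficients and must be dominated by the polynomial $H_M(X)_{\preceq g}$.

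\emph{Part (1).} It is enough to check that the right-hand side of $[\star]$ has the same multigraded Hilbert function as $M$. Fix $e\in\ZZ^n$ and compute the coefficient of $X^e$ on that side. Expanding each $K[Z_{b^i}](-c)$ monomial by monomial, the $i$-th block contributes $\#\{(c,d)\mid c\in\mathcal{G}[a^i,b^i],\ \operatorname{supp}(d)\subseteq Z_{b^i},\ c+d=e\}$, and this count is $1$ when $e$ lies in
\[
E[a^i,b^i]:=\{e\in\ZZ^n\mid a^i_k\le e_k\le b^i_k\ \text{for}\ X_k\notin Z_{b^i},\ \ e_k\ge a^i_k\ \text{for}\ X_k\in Z_{b^i}\}
\]
and is $0$ otherwise --- indeed, for $X_k\in Z_{b^i}$ the coordinate $c_k=a^i_k$ is forced and for $X_k\notin Z_{b^i}$ one has $d_k=0$, so $(c,d)$ is determined by $e$ when it exists. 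Two observations now finish the argument. First, since $b^i_j=g_j$ holds \emph{exactly} for $X_j\in Z_{b^i}$, we get $E[a^i,b^i]\cap[0,g]=[a^i,b^i]$, so the truncation of the right-hand side of $[\star]$ to degrees $\preceq g$ equals $\sum_i Q[a^i,b^i](X)=H_M(X)_{\preceq g}$. Second, writing $\tau(e)$ for the vector with $\tau(e)_k=\min(e_k,g_k)$ and using that $b^i_k<g_k$ whenever $X_k\notin Z_{b^i}$, the definition of $E[a^i,b^i]$ gives at once $e\in E[a^i,b^i]\iff\tau(e)\in E[a^i,b^i]$ for every $e\in\NN^n$. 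Since $M$ is positively $g$-determined we have $H(M,e)=H(M,\tau(e))$ for $e\in\NN^n$ and $H(M,e)=0$ otherwise: for $e\in\NN^n$ the second observation lets us replace $e$ by $\tau(e)$ on the right-hand side of $[\star]$, and then the first observation identifies that coefficient with $H(M,\tau(e))=H(M,e)$, while for $e\notin\NN^n$ both sides vanish. Hence the coefficient of $X^e$ agrees on the two sides of $[\star]$ for all $e\in\ZZ^n$. The depth formula is then immediate, since each block of $[\star]$ is a direct sum of shifted copies of the polynomial ring $K[Z_{b^i}]$, which has $\rho(b^i)=|Z_{b^i}|$ indeterminates.

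\emph{Part (2).} Let $\mathfrak{D}:\ M\iso\bigoplus_j K[Z^{(j)}](-a^j)$ be an arbitrary Hilbert decomposition, so that $\hdepth\mathfrak{D}=\min_j|Z^{(j)}|$. Since $M$ is $\NN^n$-graded and $M_{a^j}\neq0$, necessarily $a^j\succeq0$; and if $a^j\not\preceq g$, say $a^j_k>g_k$, then that block meets no degree $\preceq g$, so I discard it. For each remaining block put $b^j_k:=g_k$ if $X_k\in Z^{(j)}$ and $b^j_k:=a^j_k$ otherwise, so that $0\preceq a^j\preceq b^j\preceq g$. The same bookkeeping as in part~(1) identifies the truncation of the $j$-th block to degrees $\preceq g$ with $Q[a^j,b^j](X)$, hence truncating the identity $H_M(X)=\sum_j(\text{Hilbert series of block }j)$ to degrees $\preceq g$ yields a Hilbert partition
\[
\mathfrak{P}:\ H_M(X)_{\preceq g}=\sum_{j:\,a^j\preceq g}Q[a^j,b^j](X).
\]
By construction $b^j_k=g_k$ whenever $X_k\in Z^{(j)}$, so $\rho(b^j)\ge|Z^{(j)}|$ and therefore $\hdepth\mathfrak{D}(\mathfrak{P})=\min_j\rho(b^j)\ge\min_j|Z^{(j)}|=\hdepth\mathfrak{D}$. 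The ``in particular'' part then follows: by part~(1) every $\mathfrak{D}(\mathfrak{P})$ is a Hilbert decomposition of $M$, so $\hdepth\mathfrak{D}(\mathfrak{P})\le\hdepth M$; taking for $\mathfrak{D}$ a decomposition attaining $\hdepth M$ produces a partition realizing this value; and there are only finitely many Hilbert partitions, because every interval occurring in one is contained in $[0,g]$ and the number of intervals is bounded by the sum of the coefficients of $H_M(X)_{\preceq g}$.

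The step I expect to be the real obstacle is the passage, in part~(1), from agreement on the box $[0,g]$ to agreement over all of $\ZZ^n$: this is precisely where positive $g$-determinedness of $M$ (equivalently, the legitimacy of replacing $H_M(X)$ by the polynomial $H_M(X)_{\preceq g}$, as in Miller's reduction) is used in an essential way, and one must verify that the vector space side of $[\star]$ is ``reconstructed from the box'' by the very same rule --- which is the content of the equivalence $e\in E[a^i,b^i]\iff\tau(e)\in E[a^i,b^i]$. Everything else is elementary monomial bookkeeping.
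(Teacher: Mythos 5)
This theorem is quoted in the paper as \cite[Theorem 3.3]{IJ} without proof, so there is no in-paper argument to compare against; your proof is correct and follows the same route as the cited source (and its ancestor \cite{HVZ}): part (1) by matching multigraded Hilbert functions, using positive $g$-determinedness to propagate agreement from the box $[0,g]$ to all of $\ZZ^n$, and part (2) by truncating an arbitrary decomposition to $[0,g]$ and noting $Z_{b^j}\supseteq Z^{(j)}$. The bookkeeping (uniqueness of the pair $(c,d)$, the identity $E[a^i,b^i]\cap[0,g]=[a^i,b^i]$, and the harmlessness of discarding blocks with $a^j\not\preceq g$) all checks out.
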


We see that, in order to effectively compute the Hilbert depth of $M$, we may use the following corollary.

\begin{corollary}\cite[Corollary 3.4]{IJ}\label{cor:hdepth}
There exists a Hilbert partition
$$
\mathfrak{P}:H_M(X)_{\preceq g}=\sum_{i=1}^r Q[a^i, b^i](X)
$$
of $H_M(X)_{\preceq g}$ such that
$
\hdepth M =\min\{\rho(b^i):\  i=1,\ldots,r\}.
$
\end{corollary}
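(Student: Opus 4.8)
The plan is to derive this as a direct consequence of Theorem \ref{theo:main}, so the proof will be short; the only point requiring a word of care is the attainment of the maximum, and that is already built into the statement of Theorem \ref{theo:main}(2).

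First I would invoke Theorem \ref{theo:main}(2), which asserts that
\[
\hdepth M = \max_{\mathfrak{P}} \hdepth \mathfrak{D}(\mathfrak{P}),
\]
the maximum being taken over the set of Hilbert partitions of $H_M(X)_{\preceq g}$. The key structural fact here is that this set is \emph{finite}: every interval polynomial $Q[a^i,b^i](X)$ occurring in a Hilbert partition has all its exponents in the support of $H_M(X)_{\preceq g}$, and the coefficients are nonnegative integers bounded by those of $H_M(X)_{\preceq g}$, so only finitely many collections $\{[a^i,b^i]\}$ can sum to $H_M(X)_{\preceq g}$. (This finiteness is exactly what makes the ``$\max$'' in Theorem \ref{theo:main}(2) legitimate, so I would simply cite it rather than re-prove it.)

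Since the maximum is over a finite nonempty set, it is attained: choose a Hilbert partition $\mathfrak{P}: H_M(X)_{\preceq g}=\sum_{i=1}^r Q[a^i,b^i](X)$ with $\hdepth \mathfrak{D}(\mathfrak{P}) = \hdepth M$. Now apply Theorem \ref{theo:main}(1) to this particular $\mathfrak{P}$: it gives that $\mathfrak{D}(\mathfrak{P})$ is a Hilbert decomposition of $M$ and, more to the point,
\[
\hdepth \mathfrak{D}(\mathfrak{P}) = \min\{\rho(b^i):\ i=1,\ldots,r\}.
\]
Combining the two displayed equalities yields $\hdepth M = \min\{\rho(b^i):\ i=1,\ldots,r\}$ for this $\mathfrak{P}$, which is precisely the claim.

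There is no real obstacle here — the statement is a bookkeeping corollary of Theorem \ref{theo:main}. If anything, the only thing worth flagging explicitly in the write-up is that one must pass through a partition realizing the maximum (rather than an arbitrary one), since for a generic Hilbert partition one only gets the inequality $\hdepth M \geq \min\{\rho(b^i)\}$ from part (1) together with part (2).
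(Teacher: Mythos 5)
Your proof is correct and is exactly the intended derivation: the paper simply quotes this as \cite[Corollary 3.4]{IJ}, an immediate consequence of Theorem \ref{theo:main}, obtained by picking a partition attaining the (finite) maximum in part (2) and then reading off $\hdepth\mathfrak{D}(\mathfrak{P})=\min\{\rho(b^i)\}$ from part (1). Your closing remark about needing a maximizing partition rather than an arbitrary one is the right point to flag.
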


\section{Restricting the search for a good partition}\label{Improve}

As seen in the previous section, the Hilbert depth of $M$ can be computed by considering all Hilbert partitions of $H_M(X)_{\preceq g}$. In practice, the number of possible partitions can easily become huge.
For many practical purposes (for example, for implementation of the method in a computer program), one needs to restrict (as much as possible) the search for a partition which will finally provide the right Hilbert depth. In this
section, we show that an improvement is indeed possible. Our results are extending some of the ideas presented by Giancarlo Rinaldo in \cite{Ri} and Shen in \cite{Sh} for computations of Stanley depth in the case of a factor of a
monomial ideal to the general case of a finitely generated module.

Since many results in this section depend on a number $g\in \NN^n$ such that $M$ is positively $g$-determined, we shall assume that $g$ is fixed and known from previous computations (for example by using Proposition
\ref{prop:ezra}).

\begin{definition} Let $B$ be a subset of $\NN^n$ and $ 0 \leq s \leq n$. We define two subsets of $B$,
$$ B_{<s} := \{ a \in B : \rho(a) < s \} \quad  \text{and} \quad B_{\geq s} := \{ a \in B : \rho(a) \geq s \},$$
where $\rho$ is the function defined in Section \ref{Pre}.
\end{definition}

 Our purpose is to test whether $M$ has a partition $\mathfrak{P} $ whose hdepth is equal to $s$. To reach this goal set $B=\{a:\ X^a \text{ is a monomial of the polynomial } H_M(X)_{\preceq g}\} $ and consider $B$ as a disjoint union
 of the two sets defined above
$$ B = B_{<s} \cup B_{\geq s}.$$

It is easy to observe that if $\mathfrak{P}$ is a Hilbert partition of $H_M(X)_{\preceq g} $, then we may write $ \mathfrak{P} = A + A'$,
so that
$$ A = \sum\limits_{i \in I} Q[a^i,b^i](X), \ \ \ A' = \sum\limits_{j \in I'} Q[{a}^j,{b}^j](X)$$
where  $a^i \in B_{<s}$ and ${a}^j \in B_{\geq s}$ for all $i\in I$ and $j\in I'$. Then $\mathfrak{P}$ can further be refined to a new partition $\mathfrak{P}' = A + A''$
with
$$A''= \sum\limits_{j\in I''} Q[{a}^j,{a}^j](X)$$
where ${a}^j \in B_{\geq s}$ for all $j\in I''$.

Therefore, if a partition $\mathfrak{P} $ with $\hdepth=s$ exists,  then the part $A$ of $\mathfrak{P} $ is composed of polynomials induced by intervals $Q[a,b](X)$, where $a \in B_{<s}$ and $b \in B_{\geq s}$.
At first glance, in order to find $A$, we have  to consider for each element $ a \in B_{<s}$ all possible candidates $b \in B_{\geq s}$ with $a \preceq b$. In the following, we show that the list of candidates can be reduced
considerably.

\begin{proposition}\label{prop:1}
Let $P=Q[a,b](X)$ be a polynomial such that $b\preceq g$ and $\rho(a) < s \leq \rho(b)$. Then for each
$$ b^0 \in \Min \{ x\ :\  a \preceq x \preceq b ,\  \rho (x) \geq s \} $$
there exists a disjoint decomposition of $P$
$$ P = P_0 +  \sum\limits_{i=1}^{r} P_i,  \ \ \ \ \ \ \ \ \ \ \ \ \ \ (*) $$
such that $P_{0}$ is the polynomial induced by the interval $[a,b^0]$,  $P_i$ is the polynomial induced by the interval $[a^i,b^i]$, $b^r = b$ and $ \rho(b^i) \geq s$ for all $i=1, \ldots, r$.
\end{proposition}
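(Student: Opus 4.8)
The plan is to induct on the "defect" $\rho(b) - \rho(b^0)$, i.e. on how far the chosen minimal element $b^0$ sits below $b$ in the $\rho$-grading, peeling off one variable at a time. Fix $b^0 \in \Min\{x : a \preceq x \preceq b,\ \rho(x) \geq s\}$. The base case is $b^0 = b$: then $P = P_0$ and there are no further summands, so $(*)$ holds trivially with $r = 0$.

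For the inductive step, suppose $b^0 \prec b$. Since $b^0 \preceq b$ and $b^0 \neq b$, there is a coordinate $j$ with $b^0_j < b_j$. I would first argue that we may choose such a $j$ with $X_j \notin Z_{b^0}$, i.e. $b^0_j < g_j$: indeed, if $b^0_j = g_j$ for every coordinate where $b^0_j < b_j$, then enlarging any one such coordinate by $1$ would produce $x$ with $b^0 \prec x \preceq b$ and $\rho(x) \geq \rho(b^0) \geq s$ — but this $x$ could be incomparable to nothing below $b^0$, so it need not contradict minimality of $b^0$ directly; the cleaner route is: pick any coordinate $j$ with $b^0_j < b_j$, and split the box $[a,b]$ along the hyperplane between the "$j$-th coordinate $\le b^0_j$" slab and the "$j$-th coordinate $> b^0_j$" slab. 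Concretely, write
\[
Q[a,b](X) = Q[a, b^0_j\text{-capped } b](X) + Q[a', b](X),
\]
where the first interval has top coordinate vector equal to $b$ except in position $j$ where it is $b^0_j$ (call it $b^{(1)}$), and $a'$ equals $a$ except in position $j$ where it is $b^0_j + 1$. These two intervals are disjoint and cover $[a,b]$ because every $c \in [a,b]$ has either $c_j \le b^0_j$ or $c_j \ge b^0_j + 1$. Now note $b^0 \preceq b^{(1)} \preceq b$, so $b^0$ still lies in $[a, b^{(1)}]$ and is still a minimal element of $\{x \in [a,b^{(1)}] : \rho(x) \ge s\}$ (a subset of the original set). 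Apply the induction hypothesis to $Q[a, b^{(1)}](X)$ with the same $b^0$: since $\rho$ only depends on how many coordinates hit $g$, and we have strictly decreased $b_j$, the defect $\rho(b^{(1)}) - \rho(b^0) \le \rho(b) - \rho(b^0)$ drops whenever the capped coordinate was one with $b_j = g_j$; to make the induction genuinely decrease I would instead induct on $\sum_k (b_k - b^0_k)$, which strictly decreases since $b^{(1)}_j = b^0_j < b_j$ and all other coordinates are unchanged. The induction hypothesis then gives $Q[a,b^{(1)}](X) = P_0 + \sum_{i=1}^{r-1} P_i$ with the required properties. Finally set $P_r := Q[a', b](X)$; since $b^r = b$ by construction and $\rho(b^r) = \rho(b) \ge s$, the decomposition $(*)$ is complete with $r$ summands.

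The main obstacle is bookkeeping: ensuring the newly created intervals $[a', b]$ and $[a, b^{(1)}]$ genuinely partition (not just cover) $[a,b]$ — this is the distributive-lattice "slab split" and is routine — and, more delicately, verifying that $b^0$ remains a minimal element of the restricted set $\{x \in [a, b^{(1)}] : \rho(x) \ge s\}$ so that the induction hypothesis applies with the same $b^0$. This holds because $\{x \in [a,b^{(1)}] : \rho(x)\ge s\} \subseteq \{x \in [a,b] : \rho(x)\ge s\}$ and $b^0$ lies in the smaller set, so any element of the smaller set below $b^0$ would already violate minimality in the larger set. I would also need to confirm that the side pieces $P_i$ for $i < r$ inherited from the recursion automatically satisfy $\rho(b^i) \ge s$ (true by the induction hypothesis) and that the construction terminates, which is guaranteed by the strictly decreasing quantity $\sum_k (b_k - b^0_k) \in \NN$.
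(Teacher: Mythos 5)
Your proof is correct and, once the recursion is unrolled (peeling coordinates $j=1,\dots,n$ in turn), it produces exactly the staircase decomposition the paper writes down in closed form: $P_0=Q[a,b^0]$ plus one slab per coordinate where $c_j>b^0_j$ and $c_k\le b^0_k$ for $k<j$, with $\rho(b^i)\ge\rho(b^0)\ge s$ following from $b^0\preceq b^i\preceq g$ in both arguments. The only difference is presentational (induction on $\sum_k(b_k-b^0_k)$ versus an explicit formula with a direct disjointness check), so this is essentially the paper's proof.
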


\begin{proof}
We see that
\begin{align*}
P & =  (X_1 ^{a_1} + ... + X_1 ^{b_1})\cdots (X_n ^{a_n} + ... + X_n ^{b_n}) \\
  & = X^a (1 + ... + X_1 ^{b_1-a_1}) \cdots (1 + ... + X_n ^{b_n-a_n}),
\end{align*}
so we may assume for simplicity and without loss of generality that $a=(0,...,0) \in \NN^n$. Then we have
\begin{align*}
P & = (1 + X_1 + ... + X_1 ^{b_1}) \cdots (1 + X_n  + ... + X_n ^{b_n}) \\
  & = P_0 + \sum\limits_{i=1}^{r} P_i,
\end{align*}
where we set
$$P_0 = (1 + ... + X_1^{b^0_1}) \cdots (1+...+X_n ^ {b^0_n})$$ and
$$P_i = (1 + ... + X_1^{b^0_1}) \cdots (1 + ...+X_{i-1}^{b^0_{i-1}}) (X_i ^ {b^0_i + 1} + ... + X_i ^ {b_i}) (1 + ... + X_{i+1} ^{b_{i+1}}) \cdots (1 + ... + X_n ^{b_n})$$
for all $i=1,\ldots, r$ (in case $b^0_i=b_i$, the term $P_i$ is simply $0$). Thus $P_i$ is the polynomial
induced by the interval $[a^i, b^i]$, where $a^i=(0,\ldots,0,b^0_i+1,0,\ldots,0)$ and $b^i$ is given by
\begin{center}
\[ b^i_j = \left\{ \begin{array}{ll}
         b^0_j, & \mbox{if $j < i$},\\
        b_j, & \mbox{otherwise}.\end{array} \right. \]
\end{center}
Since $b^0 \preceq b^i \preceq b \preceq g$, we get that $\rho(b^i) \geq \rho(b^0) \geq s$, as needed.

We claim that $(*)$ is a partition of $[0,b]$. To prove this, it is enough to show $\Mon(P_i) \cap \Mon(P_j) \neq \emptyset$ if and only if $i=j$ and that the equality $ P = P_0 +  \sum\limits_{i=1}^{r} P_i$ holds.

For the equality, we will show that $\Mon(P) = \Mon(P_0) \cup \bigcup\limits_{i=1}^{r} \Mon(P_i)$. We have only to show that $ \Mon(P) \subset \Mon(P_0) \cup \bigcup\limits_{i=1}^{r} \Mon(P_i)$ because the other equality is
obvious.

Let $u \in \Mon(P),\ u = X^c$. If $c_1 > b^0_1$, then $ u \in \Mon(P_1)$, otherwise for sure $u \notin \Mon(P_1)$. If $c_1\le b^0_1$, we check whether $c_2 > b^0_2$. If so, then $ u \in \Mon(P_2)$, otherwise $u \notin \Mon(P_1) \cup
\Mon(P_2)$. So, after checking all the variables, we find that either (a): if $c_j \leq b^0_j$ for all $j=1,\ldots,i-1$ and $c_i > b^0_i$, then $u \in \Mon(P_i)$; or (b): if $c_i \leq b^0_i$ for all $i=1,\ldots,n$, then $u \in \Mon(P_0)$.
It is also clear from this description that $\Mon(P_i) \cap \Mon(P_j) \neq \emptyset$ if and only if $i=j$ .
\end{proof}

\begin{remark}\label{prop:2}In fact, in Proposition \ref{prop:1}, we have that $\rho(b^0)=s$. Indeed, we
may again assume that $a=(0,...,0) \in \NN^n$. Then, if $\rho(b^0)=t > s$, we may suppose  that $b^0 _i = g_i$ for all $i=1,\ldots,
t$.   We have $a<b'=(b^0 _1,...,b^0 _s,0,...,0) < b^0$, $\rho(b') = s$ and  we get a contradiction with the minimality of $b^0$.
\end{remark}

\begin{definition} Let $a\in B_{<s}$. We define the set
$$ B_{=s}(a) := \{ x \in B_{\ge s} : a\preceq x, \ \rho(x) = s \}.$$
\end{definition}

\begin{theorem}\label{theo:new}
Assume $ \hdepth M \geq s$. Then there exists a Hilbert partition
$$ \mathfrak{P} : H_M(X)_{\preceq g} = \sum\limits_{i=1}^{r} Q[a^i,b^i](X) $$
such that if $\rho(a^i) < s$, then $b^i\in B_{=s}(a)$.
\end{theorem}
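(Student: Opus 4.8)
The plan is to start from the reduction already established in the text: if $\hdepth M \geq s$, then by Corollary \ref{cor:hdepth} there is a Hilbert partition $\mathfrak{P}$ of $H_M(X)_{\preceq g}$ all of whose generating intervals $[a^i,b^i]$ satisfy $\rho(b^i) \geq s$, and by the refinement argument preceding Proposition \ref{prop:1} we may assume that whenever $\rho(a^i) \geq s$ the corresponding interval is trivial, i.e.\ $a^i = b^i$ (so automatically $b^i \in B_{=s}(a^i)$ after a further trivial splitting, or we simply leave those alone since the condition in the theorem only constrains intervals with $\rho(a^i) < s$). Thus the only intervals that need attention are those with $\rho(a^i) < s \leq \rho(b^i)$, and for each such interval we want to replace it by a sub-collection of intervals whose ``tops'' land exactly in $B_{=s}(a^i)$.

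The key step is to apply Proposition \ref{prop:1} to each interval $P = Q[a^i,b^i](X)$ with $\rho(a^i) < s \leq \rho(b^i)$. Fix such an interval and pick any $b^0 \in \Min\{x : a^i \preceq x \preceq b^i,\ \rho(x) \geq s\}$. Proposition \ref{prop:1} gives a disjoint decomposition $P = P_0 + \sum_{k=1}^{r} P_k$ where $P_0 = Q[a^i,b^0](X)$ and each $P_k = Q[a^i_k, b^i_k](X)$ has $\rho(b^i_k) \geq s$. By Remark \ref{prop:2} we have $\rho(b^0) = s$ exactly, and since $b^0 \in B$ (being a monomial of $P_0 \subseteq P \subseteq H_M(X)_{\preceq g}$) and $a^i \preceq b^0$, we conclude $b^0 \in B_{=s}(a^i)$. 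So the piece $P_0$ now has the desired form. The pieces $P_1,\dots,P_r$ do not yet have the desired form: their tops $b^i_k$ satisfy $\rho(b^i_k) \geq s$ but possibly $\rho(b^i_k) > s$, and their bottoms $a^i_k = (0,\dots,0,b^0_i+1,0,\dots,0)$ (in the shifted coordinates) may have $\rho(a^i_k) < s$ as well. The natural move is to recurse: apply the same procedure to each $P_k$ with $\rho(a^i_k) < s$, and for each $P_k$ with $\rho(a^i_k) \geq s$ refine it to trivial intervals as before. One must check this recursion terminates — it does, because each application strictly increases the ``bottom'' $a$ in the partial order (the new bottoms $a^i_k$ dominate the old $a^i$ in at least one coordinate and are $\preceq b^i$), and all bottoms stay inside the finite box $[a^i, b^i]$, so the process stops after finitely many steps.

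Assembling: replacing every non-conforming interval of $\mathfrak{P}$ by the finite conforming collection produced by the (terminating) recursion yields a new Hilbert partition $\mathfrak{P}'$ of $H_M(X)_{\preceq g}$; it is still a valid partition because each replacement is a genuine disjoint decomposition of the polynomial it replaces (Proposition \ref{prop:1} guarantees $\Mon(P_i) \cap \Mon(P_j) = \emptyset$ for $i \neq j$), and disjoint decompositions compose. In $\mathfrak{P}'$, every interval $[a^i,b^i]$ with $\rho(a^i) < s$ has $b^i \in B_{=s}(a^i)$, which is exactly the assertion. I expect the main obstacle to be the bookkeeping for termination of the recursion and making precise that a chain of disjoint refinements remains a Hilbert partition (i.e.\ that the $P_k$ pieces with $\rho(a^i_k) \geq s$ really can be broken into singletons $Q[c,c](X)$ covering exactly $\Mon(P_k)$ — but this is immediate since $P_k = \sum_{c \in \Mon(P_k)} X^c$ and each $X^c$ with $c$ a monomial of $H_M(X)_{\preceq g}$ has $c \in B$); none of this is deep, but it is where the care is needed.
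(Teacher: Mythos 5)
Your proof is correct and follows the same route as the paper: start from a partition in which every $\rho(b^i)\ge s$ (Corollary \ref{cor:hdepth}) and repeatedly apply Proposition \ref{prop:1} together with Remark \ref{prop:2} to any interval with $\rho(a^i)<s$ whose top is not minimal. You are in fact more careful than the paper's two-line proof, which silently iterates the replacement; your observation that the pieces $P_1,\dots,P_r$ may still fail the condition and your termination argument (bottoms strictly increase inside a finite box) supply exactly the bookkeeping the paper leaves to the reader.
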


\begin{proof}
Since $\hdepth M \geq s$, we have a partition on $H_M(X)_{\preceq g}$,
$$ \mathfrak{P} : H_M(X)_{\preceq g} = \sum\limits_{i=1}^{r} Q[a^i,b^i](X), $$
with $\rho(b^i) \geq s.$ If there exists $a^j$ such that $\rho(a^j) < s$ and $b^j$ is not minimal, we apply  Proposition \ref{prop:1} to the polynomial induced by the interval $[a^j,b^j]$ and use Remark \ref{prop:2} to complete the proof.
\end{proof}

\begin{example}\label{example1}
Let $R=K[X_1,X_2]$ with $\deg (X_1)=(1,0)$ and $\deg (X_2)=(0,1)$. Let $M=R \oplus (X_1,X_2)R$. Then we may choose $g=(1,1)$ and
\[
H_M(X_1,X_2)_{\preceq (1,1)}=1+2X_1+2X_2+2X_1X_2.
\]
In order to use Corollary \ref{cor:hdepth}  to get that $\hdepth M\ge 1$ (for details see  \cite[Example 3.5]{IJ}), one has to compute a full Hilbert partition,  for example the following
\begin{align*}
\mathfrak{P}_1:& (1+X_1+X_2+X_1X_2)+(X_1+X_1X_2)+X_2.\\
\end{align*}
In this case $s=1$, so we have that $B_{<1}=\{(0,0)\}$ and $ B_{=1}((0,0)) = \{(1,0),(0,1)\}$. By Theorem \ref{theo:new} we only have to cover $(0,0)$ with an interval ending in an element of $ B_{=1}((0,0))$. The
computation is simply reduced at obtaining one of the following two possible covers:
$$
\mathfrak{C}_1: (1+X_1),\ \  \ \ \mathfrak{C}_2: (1+X_2).
$$
\end{example}

\section{An algorithm for computing the multigraded Hilbert depth of a module}\label{Algo}

In this section we describe a recursive algorithm for computing the multigraded Hilbert depth of a module. The algorithm is presented in the form of a function that will be called recursively, thus realizing a backtracking search
for a Hilbert partition of a given $\hdepth$. The algorithm may also be used directly for computing Stanley depth in the case of a factor of monomial ideal. See also \cite[Algorithm 1]{Ri}, for a non-recursive algorithm for
computing Stanley depth in the case of a factor of a monomial ideal.

\allowdisplaybreaks
\begin{algorithm}[H]\label{algo:hdepth}
\SetKwFunction{FindElementsToCover}{{\bf FindElementsToCover}}
\SetKwFunction{FindPossibleCovers}{{\bf FindPossibleCovers}}
\SetKwFunction{Beg}{begin}
\SetKwFunction{En}{end}
\SetKwFunction{size}{size}
\SetKwFunction{CheckHilbertDepth}{{\bf CheckHilbertDepth}}
\SetKwData{Boolean}{Boolean}
\SetKwData{Container}{Container}
\SetKwData{Polynomial}{Polynomial}
\caption{Function that checks if $\hdepth\ge s$ recursively}

\KwData{$g\in \NN^n$, $s \in \NN$ and a polynomial $P(X)=H_M(X)_{\preceq g}\in \NN[X_1,...,X_n]$}
\KwResult{{\it true} if $\hdepth M \geq s$}
\Boolean \CheckHilbertDepth{g,s,P}\;
\Begin{
\nl \lIf {$P\notin \NN[X_1,...,X_n]$}{\Return{false}\;}
\nl \Container $E=$\FindElementsToCover{g,s,P}\;
\nl \lIf {$\size{E}=0$}{\Return{true}\;}
\nl \Else{
\nl \For { i=\Beg{E} \KwTo i=\En{E} }{
\nl \Container $C[i]$:=\FindPossibleCovers{g,s,P,$E[i]$}\;
\nl \lIf {\size{$C[i]$}$=0$}{\Return{false}\;}
\nl \For { j=\Beg{$C[i]$} \KwTo j=\En{$C[i]$} }{
\nl \Polynomial $\tilde{P}(X)=P(X)-Q[E[i],C[i][j]](X)$\;
\nl \lIf{\CheckHilbertDepth{g,s,$\tilde{P}$}=true}{\Return{true}\;}
}
}
\nl \Return{false}\;
}
}
\end{algorithm}

At each call, the function {\bf CheckHilbertDepth} checks one interval of type $[a,b]$ to see if the polynomial induced by it may be part of a suited Hilbert partition. All possible intervals are checked in a backtracking search.  A node of the searching tree is represented by a polynomial $P$. Below we describe the key steps.
\begin{itemize}
	\item line 1. If the polynomial $P$ does not have natural numbers as coefficients (positive coefficients), then it is not a sum of polynomials induced by intervals and is not a node in the searching tree.
	\item line 2. In this step $B_{<s}$ is computed and stored in a container. The container should provide some basic access functions (for example, we want to query its size).
	\item line 3.  If $B_{<s}$ is empty, then we are done. We have reached a good leaf of the searching tree.
	\item line 4,5,8. We generate and investigate all the children  of the node $P$.
	\item line 5,6. In this loop, for each $a\in B_{<s}$, we compute the set $B_{=s}(a)$ (here we use Theorem \ref{theo:new}).
	\item line 7. If $B_{=s}(a)$ is empty, we are in a bad node, and we should go back to the previous node.
	\item line 9,10. The child $\tilde{P}$ is generated in line 9 and investigated in the recursive call at line 10.
	\item line 11. If we have reached this point, then our search in this node has failed, and we should go back to the previous node. If we are at the root, then  $\hdepth<s$.
\end{itemize}

We conclude this section with a remark on the functions {\bf FindElementsToCover} and {\bf FindPossibleCovers}. At each node, they should compute the sets $B_{<s}$ and $B_{=s}(a)$ for all $a\in B_{<s}$. For a practical
implementation of the algorithm, it is quite inefficient to compute them at each node. It is likely better to adjust them for the newly generated child $\tilde{P}$ and pass them down as input data for main recursive function {\bf
CheckHilbertDepth}.

\section{An algorithm for computing the Stanley depth in a special case}\label{AlgoS}

In this section, we further assume that $\dim_K M_a \le 1$ for all $a\in \ZZ^n$, and we modify Algorithm \ref{algo:hdepth} for computing the Stanley depth in this case. The algorithm checks supplementary whether the Hilbert partition computed by Algorithm \ref{algo:hdepth}  induces a Stanley decomposition.

\begin{algorithm}[H]\label{algo:sdepth}
\SetKwFunction{FindElementsToCover}{{\bf FindElementsToCover}}
\SetKwFunction{FindPossibleCovers}{{\bf FindPossibleCovers}}
\SetKwFunction{Beg}{begin}
\SetKwFunction{En}{end}
\SetKwFunction{size}{size}
\SetKwFunction{CheckStanleyDepth}{{\bf CheckStanleyDepth}}
\SetKwData{Boolean}{Boolean}
\SetKwData{Container}{Container}
\SetKwData{Polynomial}{Polynomial}
\caption{Function that checks if $\sdepth\ge s$ recursively}

\KwData{$g\in \NN^n$, $s \in \NN$ and a polynomial $P(X)=H_M(X)_{\preceq g}\in \NN[X_1,...,X_n]$}
\KwResult{{\it true} if $\sdepth M \geq s$}
\Boolean \CheckStanleyDepth{g,s,P}\;
\Begin{
\nl \lIf {$P\notin \NN[X_1,...,X_n]$}{\Return{false}\;}
    \Container $E=$\FindElementsToCover{g,s,P}\;
    \lIf {$\size{E}=0$}{\Return{true}\;}
    \Else{
    \For { i=\Beg{E} \KwTo i=\En{E} }{
    \Container $C[i]$:=\FindPossibleCovers{g,s,P,$E[i]$}\;
    \lIf {\size{$C[i]$}$=0$}{\Return{false}\;}
    \For { j=\Beg{$C[i]$} \KwTo j=\En{$C[i]$} }{
\nl \While {$a \in \mathcal{G}[E(i),C[i][j]]$ }{
\nl \lIf {$K[Z_{C[i][j]}] \cap \Ann M_{a} \neq 0$ }{\Return{false}\;}}
    \Polynomial $\tilde{P}(X)=P(X)-Q[E[i],C[i][j]](X)$\;
    \lIf{\CheckStanleyDepth{g,s,$\tilde{P}$}=true}{\Return{true}\;}
}
}
    \Return{false}\;
}
}
\end{algorithm}

\medskip

The only difference from Algorithm 1 appears at lines 2, 3. Here we check whether the Hilbert decomposition that we found is a Stanley decomposition. For this we use \cite[Proposition 4.4]{IJ}. The only thing to prove is that the conditions at lines 1, 3 ensure that $P$ is inducing a Stanley decomposition. Assume that for all $a \in \mathcal{G}[E(i),C[i][j]]$ we have that $K[Z_{C[i][j]}] \cap \Ann M_{a}= 0$. Let $0 \neq m_a\in M_{a}$. Since  $\dim_K M_{a} = 1$ we
have that $\Ann m_a=\Ann M_{a}$, so $K[Z_{C[i][j]}] \cap \Ann m_{a}= 0$. Then $m_a K[Z_{C[i][j]}]$ is a Stanley space. Finally, since all the coefficients of $P$ are $\le 1$, the condition at line 1 assures that they  do not
overlap.

We end with a vague remark. It is easy to see that for two intervals
$$
[a_i,b_i] \cap [a_j,b_j] \neq \emptyset \Longleftrightarrow a_i \vee a_j < b_i \wedge b_j.
$$
Since in this particular case the intervals do not overlap, for a practical implementation of the algorithm one may take advantage of this fact by saving the intervals and replacing the test needed at line 1.

\section{Computational results}\label{Experiments}

In this section, we present the results of our experiments with an implementation of the Algorithm \ref{algo:hdepth} in the computer algebra system CoCoA \cite{C}.
This implementation (as well as some test examples) is available online, see \cite{IZ}. The experiments were run on an Apple Mac Pro with a processor running at 3 Ghz.

\bigskip
Encouraged by the results obtained in \cite{p}, we have focused on obtaining a complete answer to Problems \ref{P2:Herzog} and \ref{P3:Herzog}.

The following example in dimension 4 shows that the answer to Problem \ref{P2:Herzog} is \emph{No}.

\begin{example}\label{directsum}
Let $n=4,\ M = R^2$ and $N=m$, where $m\subset R$ is the maximal ideal. It is known that $\Min \{ \sdepth(M),\sdepth(N)\} = 2$. The Hilbert partition $\mathfrak{P}_1$ presented below shows that
$ \hdepth(M \oplus N) = 3$.
\begin{align*}
\mathfrak{P}_1: & (1+X_1+X_2+X_3+X_1X_2+X_1X_3+X_2X_3+X_1X_2X_3)+\\
                & (1+X_1+X_2+X_4+X_1X_2+X_1X_4+X_2X_4+X_1X_2X_4)+\\
                & (X_1+X_1X_3+X_1X_4+X_1X_3X_4)+(X_2+X_1X_2+X_2X_3+X_1X_2X_3)+\\
                & (X_3+X_1X_3+X_3X_4+X_1X_3X_4)+(X_3+X_2X_3+X_3X_4+X_2X_3X_4)+\\
                & (X_4+X_1X_4+X_2X_4+X_1X_2X_4)+(X_4+X_2X_4+X_3X_4+X_2X_3X_4)+\\
                & \mbox{monomials of degree $\geq 3$}.
\end{align*}

The Hilbert partition $\mathfrak{P}_1$ induces a Hilbert decomposition, which in turn induces the Stanley decomposition
\begin{align*}
\overline{\mathfrak{D}(\mathfrak{P}_1)}: & (1,0,0)K[X_1,X_2,X_3] \oplus (0,1,0)K[X_1,X_2,X_4] \oplus\\
               & (0,0,X_1)K[X_1,X_3,X_4] \oplus (0,0,X_2)K[X_1,X_2,X_3] \oplus \\
               & (0,X_3,X_3)K[X_1,X_3,X_4] \oplus (0,X_3,0)K[X_2,X_3,X_4] \oplus \\
               & (X_4,0,X_4)K[X_1,X_2,X_4] \oplus (X_4,0,0)K[X_2,X_3,X_4] \oplus\\
               & (0,X_1X_2X_3,0)K[X_1,X_2,X_3] \oplus (X_1X_3X_4,0,0)K[X_1,X_3,X_4] \oplus \\
               & (0,0,X_1X_2X_4)K[X_1,X_2,X_4] \oplus (0,0,X_2X_3X_4)K[X_2,X_3,X_4] \oplus \\
               & (X_1X_2X_3X_4,0,0)K[X_1,X_2,X_3,X_4] \oplus (0,X_1X_2X_3X_4,0)K[X_1,X_2,X_3,X_4] \oplus\\ 	  	
               & (0,0,X_1X_2X_3X_4)K[X_1,X_2,X_3,X_4].
\end{align*}
It is clear that the multigraded Hilbert series of $M\oplus N$ coincide with the one of $\overline{\mathfrak{D}}(\mathfrak{P}_1)$.
That $\overline{\mathfrak{D}}(\mathfrak{P}_1)$ is indeed a Stanley decomposition follows once we have checked that the sums
\begin{align*}
               & (0,0,X_1)K[X_1,X_3,X_4] + (0,0,X_2)K[X_1,X_2,X_3] + \\
               & (0,X_3,X_3)K[X_1,X_3,X_4] + (0,X_3,0)K[X_2,X_3,X_4]\\
\end{align*}
and
\begin{align*}
               & (0,0,X_1)K[X_1,X_3,X_4] + (X_4,0,X_4)K[X_1,X_2,X_4] +\\
               & (X_4,0,0)K[X_2,X_3,X_4] + (0,0,X_1X_2X_4)K[X_1,X_2,X_4]
\end{align*}
are direct. It is easy to see that $ \sdepth(M \oplus N) \le 3$, since it is not a free module, or by using
the results of \cite{p}. We conclude that
$$
3=\sdepth(M \oplus N)=\hdepth(M \oplus N)>\Min \{ \sdepth(M),\sdepth(N)\}=2.
$$

\end{example}

\begin{remark} The computation time obtained with the experiment library depends on the input order of the coefficients of $H_M(X)_{\preceq g}$, and for each coefficient, on the order of the elements in the list of possible covers.
This is why we provide two implementations using different orders (see \cite{IZ}). The computation time for example \ref{directsum} with the implementation HdepthLib is 11805.446 seconds and with the implementation HdepthLib2 is
2760.213 seconds. Depending on the order, we also obtain different Hilbert partitions for example \ref{directsum}, an alternative Hilbert partition is the following:
\begin{align*}
\mathfrak{P'}_1: & (1+X_1+X_2+X_3+X_1X_2+X_1X_3+X_2X_3+X_1X_2X_3)+\\
                & (1+X_1+X_2+X_3+X_1X_2+X_1X_3+X_2X_3+X_1X_2X_3)+\\
                & (X_1+X_1X_2+X_1X_4+X_1X_2X_4)+(X_2+X_2X_3+X_2X_4+X_2X_3X_4)+\\
                & (X_3+X_1X_3+X_3X_4+X_1X_3X_4)+(X_4+X_2X_4+X_3X_4+X_2X_3X_4)+\\
                & (X_4+X_1X_4+X_2X_4+X_1X_2X_4)+(X_4+X_1X_4+X_3X_4+X_1X_3X_4)+\\
                & \mbox{monomials of degree $\geq 3$}.
\end{align*}
\end{remark}

\bigskip
The following example in dimension 6 shows that the answer to Problem \ref{P3:Herzog} is \emph{No}.

\begin{example} Consider $n=6$ and $I=m$,  where $m\subset R$ is the maximal ideal. It is known that $\sdepth(I)=\hdepth(I) = 3$ and we show that $ \sdepth(R \oplus I)=\hdepth(R \oplus I)= 4$. Remark that, while it is not as easy
to see as above, we have
$$
\sdepth(R \oplus I)\le \hdepth(R \oplus I)\le\hdepth_1(R \oplus I)= 4
$$
by \cite{p} (where $\hdepth_1(R \oplus I)$ is the standard graded Hilbert depth). The Hilbert partition $\mathfrak{P}_2$ presented below shows that $\hdepth(R \oplus I) = 4$.
\begin{align*}
\mathfrak{P}_2:& (1+X_1+X_2+X_3+X_4+X_1X_2+X_1X_3+X_1X_4+X_2X_3+X_2X_4+X_3X_4+X_1X_2X_3+\\
               & X_1X_2X_4+X_1X_3X_4+X_2X_3X_4+X_1X_2X_3X_4)+\\
               & (X_1+X_1X_2+X_1X_5+X_1X_6+X_1X_2X_5+X_1X_2X_6+X_1X_5X_6+X_1X_2X_5X_6)+\\
               & (X_2+X_2X_3+X_2X_5+X_2X_6+X_2X_3X_5+X_2X_3X_6+X_2X_5X_6+X_2X_3X_5X_6)+\\
               & (X_3+X_1X_3+X_3X_4+X_3X_5+X_1X_3X_4+X_1X_3X_5+X_3X_4X_5+X_1X_3X_4X_5)+\\
               & (X_4+X_1X_4+X_2X_4+X_4X_6+X_1X_2X_4+X_1X_4X_6+X_2X_4X_6+X_1X_2X_4X_6)+\\
               & (X_5+X_1X_5+X_2X_5+X_4X_5+X_1X_2X_5+X_1X_4X_5+X_2X_4X_5+X_1X_2X_4X_5)+\\
               & (X_5+X_3X_5+X_4X_5+X_5X_6+X_3X_4X_5+X_3X_5X_6+X_4X_5X_6+X_3X_4X_5X_6)+\\
               & (X_6+X_1X_6+X_2X_6+X_3X_6+X_1X_2X_6+X_1X_3X_6+X_2X_3X_6+X_1X_2X_3X_6)+\\
               & (X_6+X_3X_6+X_4X_6+X_5X_6+X_3X_4X_6+X_3X_5X_6+X_4X_5X_6+X_3X_4X_5X_6)+\\
               & (X_1X_2X_3+X_1X_2X_3X_4)+(X_1X_3X_5+X_1X_3X_4X_5)+(X_1X_3X_6+X_1X_2X_3X_6)+\\
               & (X_1X_4X_6+X_1X_2X_4X_6)+(X_1X_4X_5+X_1X_2X_4X_5)+(X_1X_5X_6+X_1X_2X_5X_6)+\\
               & (X_2X_3X_4+X_2X_3X_4X_5)+(X_2X_3X_5+X_2X_3X_5X_6)+(X_2X_4X_5+X_2X_4X_5X_6)+\\
               & (X_2X_4X_6+X_2X_3X_4X_6)+(X_2X_5X_6+X_2X_4X_5X_6)+(X_3X_4X_6+X_1X_3X_4X_6)+\\
               & \mbox{monomials of degree $\geq 4$}.
\end{align*}

The Hilbert partition $\mathfrak{P}_2$ induces a Hilbert decomposition, which in turn induces the Stanley decomposition $\overline{\mathfrak{D}}(\mathfrak{P}_2)$:
\allowdisplaybreaks
\begin{align*}
               & (X_5,X_5)K[X_3,X_4,X_5,X_6] \oplus (X_6,X_6)K[X_1,X_2,X_3,X_6] \oplus\\
               & (1,0)K[X_1,X_2,X_3,X_4] \oplus (0,X_1X_2X_3)K[X_1,X_2,X_3,X_4] \oplus\\
               & (X_5,0)K[X_1,X_2,X_4,X_5] \oplus (0,X_1X_4X_5)K[X_1,X_2,X_4,X_5] \oplus\\
               & (X_6,0)K[X_3,X_4,X_5,X_6] \oplus (0,X_1X_3X_6)K[X_1,X_2,X_3,X_6] \oplus\\
               & (X_1X_5X_6,0)K[X_1,X_2,X_5,X_6] \oplus (0,X_1)K[X_1,X_2,X_5,X_6] \oplus\\
               & (X_2X_3X_5,0)K[X_2,X_3,X_5,X_6] \oplus (0,X_2)K[X_2,X_3,X_5,X_6] \oplus \\
               & (X_1X_3X_5,0)K[X_1,X_3,X_4,X_5] \oplus (0,X_3)K[X_1,X_3,X_4,X_5] \oplus\\
               & (X_1X_4X_6,0)K[X_1,X_2,X_4,X_6] \oplus (0,X_4)K[X_1,X_2,X_4,X_6] \oplus\\
               & (X_2X_5X_6,0)K[X_2,X_4,X_5,X_6] \oplus (0,X_2X_4X_5)K[X_2,X_4,X_5,X_6] \oplus\\
               & (X_1X_3X_4X_6,0)K[X_1,X_3,X_4,X_6] \oplus (0,X_3X_4X_6)K[X_1,X_3,X_4,X_6] \oplus\\
               & (X_2X_3X_4X_5,0)K[X_2,X_3,X_4,X_5] \oplus (0,X_2X_3X_4)K[X_2,X_3,X_4,X_5] \oplus\\
               & (X_2X_4X_6,0)K[X_2,X_3,X_4,X_6] \oplus (0,X_2X_3X_4X_6)K[X_2,X_3,X_4,X_6] \oplus\\
               & (X_1X_2X_3X_5,0)K[X_1,X_2,X_3,X_5] \oplus (0,X_1X_2X_3X_5)K[X_1,X_2,X_3,X_5] \oplus \\
               & (X_1X_3X_5X_6,0)K[X_1,X_3,X_5,X_6] \oplus (0,X_1X_3X_5X_6)K[X_1,X_3,X_5,X_6] \oplus\\
               & (X_1X_4X_5X_6,0)K[X_1,X_4,X_5,X_6] \oplus (0,X_1X_4X_5X_6)K[X_1,X_4,X_5,X_6] \oplus \\
               & (X_1X_2X_3X_4X_5,0)K[X_1,X_2,X_3,X_4,X_5] \oplus (0,X_1X_2X_3X_4X_5)K[X_1,X_2,X_3,X_4,X_5] \oplus\\
               & (X_1X_2X_3X_4X_6,0)K[X_1,X_2,X_3,X_4,X_6] \oplus (0,X_1X_2X_3X_4X_6)K[X_1,X_2,X_3,X_4,X_6] \oplus\\
               & (X_1X_2X_3X_5X_6,0)K[X_1,X_2,X_3,X_5,X_6] \oplus (0,X_1X_2X_3X_5X_6)K[X_1,X_2,X_3,X_5,X_6] \oplus\\
               & (X_1X_2X_4X_5X_6,0)K[X_1,X_2,X_4,X_5,X_6] \oplus (0,X_1X_2X_4X_5X_6)K[X_1,X_2,X_4,X_5,X_6] \oplus\\
               & (X_1X_3X_4X_5X_6,0)K[X_1,X_3,X_4,X_5,X_6] \oplus (0,X_1X_3X_4X_5X_6)K[X_1,X_3,X_4,X_5,X_6] \oplus\\
               & (X_2X_3X_4X_5X_6,0)K[X_2,X_3,X_4,X_5,X_6] \oplus (0,X_2X_3X_4X_5X_6)K[X_2,X_3,X_4,X_5,X_6] \oplus\\
               & (X_1X_2X_3X_4X_5X_6,0)K[X_1,X_2,X_3,X_4,X_5,X_6] \oplus (0,X_1X_2X_3X_4X_5X_6)K[X_1,X_2,X_3,X_4,X_5,X_6].
\end{align*}

It is clear that the multigraded Hilbert series of $R\oplus I$ coincide with the one of $\overline{\mathfrak{D}}(\mathfrak{P}_2)$. That $\overline{\mathfrak{D}}(\mathfrak{P}_2)$ is indeed a Stanley decomposition follows after checking that the sums
\begin{align*}
               & (X_5,X_5)K[X_3,X_4,X_5,X_6] + (X_5,0)K[X_1,X_2,X_4,X_5] + (0,X_3)K[X_1,X_3,X_4,X_5]\\
\end{align*}
and
\begin{align*}
               & (X_6,X_6)K[X_1,X_2,X_3,X_6] + (X_6,0)K[X_3,X_4,X_5,X_6] + (0,X_1)K[X_1,X_2,X_5,X_6] +\\
               &  (0,X_2)K[X_2,X_3,X_5,X_6] + (0,X_1X_3X_6)K[X_1,X_2,X_3,X_6]
\end{align*}
are direct. We conclude that
$$
4=\sdepth(R \oplus I)=\hdepth(R \oplus I)>\sdepth(I)=\hdepth(I)=3.
$$
\end{example}
\bigskip

Finally, we have compared the CoCoA library for computing the Hilbert depth of a module \cite{IZ} with the CoCoA library for computing the Stanley depth of an ideal or factor of an ideal
implemented by Rinaldo \cite{Ri}. As test example, we have chosen the maximal ideal $m$ (the same as in \cite{Ri}). It is known (see for example \cite{BKU}) that, if $\dim R=n$, then
$$\sdepth m =\hdepth m = \Big\lceil \frac{n}{2} \Big\rceil.$$
We conclude that, while the library for computing the Hilbert depth is somewhat faster, the times are of similar magnitude.

\begin{table}[h]
\begin{tabular}{|r|r|r|}\hline
\rule[-0.1ex]{0ex}{2.5ex} dim & Stanley depth library time & Hilbert depth library time  \\ \hline
\rule{0ex}{1.5ex}5    & 0.044 s     & 0.033 s  \\ \hline
\rule{0ex}{2.5ex}6    & 0.141 s & 0.09 s  \\ \hline
\rule{0ex}{2.5ex}7    & 0.6 s  & 0.363 s     \\ \hline
\rule{0ex}{2.5ex}8     & 2.1 s & 0.835 s    \\ \hline
\rule{0ex}{2.5ex}9     & 10.312 s & 5.985 s      \\ \hline
\rule{0ex}{2.5ex}10     & 37.924 s & 13.418 s  \\ \hline
\rule{0ex}{2.5ex}11     & 200.552 s & 152.772 s  \\ \hline
\rule{0ex}{2.5ex}12    & 758.455 s  & 307.714 s  \\ \hline

\end{tabular}
\vspace*{2ex} \caption{Computation times }\label{times1}
\end{table}

\section{Acknowledgements}

The authors would like to thank  Dorin Popescu and Marius Vladoiu for their useful comments.

The first author was partially supported  by project  PN-II-RU-TE-2012-3-0161 and the second author was partially supported by project  PN-II-ID-PCE-2011-3-1023, granted by the Romanian National Authority for Scientific Research,
CNCS - UEFISCDI, during the preparation of this work.


\begin{thebibliography}{15.}
\addcontentsline{toc}{section}{Bibliography}

\bibitem{A1}  J. Apel, \emph{On a conjecture of R. P. Stanley; Part I--Monomial Ideals.}  J.~Algebr.~Comb.~\textbf{17} (2003)~39--56.
\bibitem{A2}  J. Apel, \emph{On a conjecture of R. P. Stanley; Part II--Quotients Modulo Monomial Ideals.}  J.~Algebr.~Comb.~\textbf{17} (2003)~57--74.
\bibitem{AP}  I. Anwar and D. Popescu, \emph{Stanley conjecture in small embedding dimension.} J. ~Algebra \textbf{318} (2007)~1027--1031.



\bibitem{BG}
W.~Bruns and J.~Gubeladze, \emph{Polytopes, Rings and K-Theory},
Springer, 2009.

\bibitem{BI} W.Bruns and B. Ichim, {\em Normaliz: algorithms for
rational cones and affine monoids.}
J. Algebra {\bf 324} (2010)~1098--1113.

\bibitem{BIS} W.Bruns, B. Ichim and C. Söger, {\em The power of
    pyramid decompositions in Normaliz}. Preprint arXiv:1206.1916v3.

\bibitem{BK} W.Bruns and R. Koch, {\em Computing the integral
    closure of an affine semigroup}. Univ. Iagell. Acta Math.
    {\bf 39} (2001)~59--70.

\bibitem{BKU}  W.~Bruns, Chr.~Krattenthaler and J.~Uliczka, {\sl Stanley decompositions and Hilbert depth in the Koszul complex}.
               J.~Comm.~Alg. \textbf{2} (2010)~327--357.

\bibitem{BKU2}  W.~Bruns, Chr.~Krattenthaler, and J.~Uliczka, {\sl Hilbert depth of powers of the maximal ideal}.
               Contemp.~Math. \textbf{555} (2011)~1--12.

\bibitem{BMU}  W.~Bruns, J.J.~Moyano-Fern\'andez and J.~Uliczka, {\sl Hilbert regularity of $\ZZ$-graded modules over polynomial rings}.
               Preprint arXiv:1308.2917v1.

\bibitem{C} CoCoATeam, \emph{CoCoA: a system for doing Computations in Commutative Algebra.}
Available at http://cocoa.dima.unige.it

\bibitem{H} J.~Herzog, \emph{A survey on Stanley depth.} In ``Monomial Ideals, Computations and
Applications", A.~Bigatti, P.~Gim\'enez, E.~S\'aenz-de-Cabez\'on (Eds.), Proceedings of MONICA 2011. Springer Lecture Notes in Mathematics \textbf{2083} (2013).


\bibitem{HP}  J. Herzog and D. Popescu, \emph{Finite filtrations of modules and shellable multicomplexes.} Manuscr.~Math.~\textbf{121} (2006)~385--410.

\bibitem{HSY}
J.~Herzog, A.~Soleyman--Jahan and S.~Yassemi, \emph{Stanley
decompositions and partionable simplicial complexes.}
J.~Algebr.~Comb.~{\bf 27}~(2008)~113--125.

\bibitem{HVZ}
J.~Herzog, M.~Vladoiu, X.~Zheng, \emph{How to compute the Stanley depth of a monomial ideal.}
J.~Algebra~{\bf 322}~(2009)~3151--3169.

\bibitem{IJ}
B.~Ichim and J.J.~Moyano-Fern\'andez, \emph{ How to compute the multigraded Hilbert depth of a module.} Mathematische Nachrichten ~\textbf{287} (2014)~1274--1287.

\bibitem{IZ}
B.~Ichim and A.~Zarojanu, \emph{ Hdepth: An algorithm for computing the multigraded Hilbert depth of a module.} Implemented in CoCoA. Available from https://dl.dropboxusercontent.com/s/urhrasy5ntgbwzf/Hdepth.htm.


\bibitem{M}
E.~Miller, \emph{The Alexander duality functors and local duality with monomial support.}
J.~Algebra~{\bf 231}~(2000)~ 180--234.

\bibitem{JU}
J.J.~Moyano-Fern\'andez and J.~Uliczka, \emph{ Hilbert depth of graded modules over polynomial rings in two variables.}
               J.~Algebra~{\bf 373}~(2013)~130--152.

\bibitem{p} A.~Popescu, \emph{ An algorithm to compute the Hilbert depth.}  J. Symb. Computation~\textbf{66} (2015)~1--7.
               Preprint arXiv:1307.6084v3.

\bibitem{P}
D.~Popescu, \emph{Stanley depth of multigraded modules.} J.~Algebra
{\bf 312} (10)~(2009)~2782--2797.



\bibitem{Ri} G.~Rinaldo, \emph{An algorithm to compute the Stanley depth of monomial ideals.} Le Matematiche~\textbf{LXIII} -- Fasc.~II ~(2008)~243--256.

\bibitem{Sh}
Y.~Shen, \emph{Stanley depth of complete intersection monomial ideals and upper-discrete partions.} J.~Algebra~{\bf 321}~(2009)~1285--1292.

\bibitem{S}
R.~P.~Stanley, \emph{Linear Diophantine equations and local
cohomology.} Invent. Math. {\bf 68}~(1982)~175--193.

\bibitem{U}  J.~Uliczka, {\sl Remarks on Hilbert Series of Graded Modules over Polynomial Rings}.
              Manuscr.~Math.~\textbf{132} ~(2010)~159--168.


\end{thebibliography}
\end{document}